\theoremstyle{definition}
\newtheorem{remark}[theorem]{Remark}
\DeclareMathOperator{\lc}{lc}
\DeclareMathOperator{\codim}{codim}
\DeclareMathOperator{\Rep}{Rep}
\newcommand{\x}{{\bf x}}
\newcommand{\y}{{\bf y}}
\newcommand{\z}{{\bf z}}
\newcommand{\C}{{\mathbb{C}}}
\newcommand{\Q}{{\mathbb{Q}}}
\newcommand{\I}{{\mathcal I}}
\begin{document}

\copyrightyear{2018}
\acmYear{2018}
\setcopyright{acmcopyright}
\acmConference[ISSAC '18]{2018 ACM International Symposium on Symbolic and Algebraic Computation}{July 16--19, 2018}{New York, NY, USA}
\acmPrice{15.00}
\acmDOI{10.1145/3208976.3208996}
\acmISBN{978-1-4503-5550-6/18/07}

\fancyhead{}

\title{Irredundant Triangular Decomposition}

\author{Gleb Pogudin}
\affiliation{%
  \institution{Courant Institute of Mathematical Sciences}
  \streetaddress{251 Mercer st.}
  \city{New York} 
  \state{NY} 
  \postcode{10012}
}
\email{pogudin@cims.nyu.edu}

\author{Agnes Szanto}
\affiliation{%
  \institution{North Carolina State University}
  \streetaddress{Campus Box 8205}
  \city{Raleigh} 
  \state{NC} 
  \postcode{27695}
}
\email{aszanto@ncsu.edu}

\begin{abstract}
Triangular decomposition is a classic, widely used and well-developed way to represent algebraic varieties with many applications.
In particular, there exist 
\begin{itemize}[leftmargin=\dimexpr\linewidth-8cm-\rightmargin\relax]
  \item sharp degree bounds for a single triangular set in terms of intrinsic data of the variety it represents,
  \item powerful randomized algorithms for computing triangular decompositions using Hensel lifting in the zero-dimensional case and for irreducible varieties. 
\end{itemize}
However, in the general case, most of the algorithms computing triangular decompositions  produce embedded components, which makes it impossible to
directly apply the intrinsic degree bounds. 
This, in turn, is an obstacle for efficiently applying Hensel lifting due to the  higher degrees of the output polynomials and the lower probability of success.

In this paper, we give an algorithm to compute an irredundant triangular decomposition of an arbitrary algebraic set $W$ 
defined by a set of polynomials in $\C[x_1, x_2, \ldots, x_n]$. 
Using this irredundant triangular decomposition, we  are able to give intrinsic degree bounds for the polynomials appearing in the triangular sets
and apply Hensel lifting techniques.
Our decomposition algorithm is randomized, and we analyze  the probability of success.

\end{abstract}



\maketitle
 
\section{Introduction}

Given a set of polynomials $f_1, \ldots, f_s \in \C[x_1, x_2, \ldots, x_n]$, consider the algebraic set 
\[
W = \{\z\in \C^n\;:\; f_1(\z)=\cdots=f_s(\z)=0\}.
\]
There are several common representations of algebraic sets that allow one to answer different questions about algebraic sets
or perform operations with them efficiently, for example representations via Gr\"obner bases, geometric resolution, and triangular decomposition.
This paper is focused on the latter.

Triangular decomposition is an important  tool with many applications,  
its origins going back to the works of Ritt~\cite{Ritt66}, who introduced the concept of characteristic sets. 
Several authors, including   Wu~\cite{Wu84}, Lazard~\cite{Lazard91}, Kalkbrener~\cite{Kalkbrener93}, Wang~\cite{Wang93}, 
Moreno Maza~\cite{Maza2000}, Schost~\cite{Schost2003b}, Chen~\cite{ChenThesis,ChenMaza12}, Dahan et al. \cite{Lifting}, 
have worked on triangular decompositions of algebraic sets, and some of these algorithms are implemented in {\sc Maple} in the package {\tt RegularChains}.
There exist sharp degree and height bounds for a single triangular set in terms of intrinsic data of the variety it represents, 
see for example the sequence of papers \setcitestyle{nosort}\cite{Schost2003,Schost2003b,DahanSchost,Dahanetal2012},
 these bounds are polynomial in the degree and the height of the variety. 
There are also  powerful randomized algorithms for computing triangular decompositions using Hensel lifting in the zero-dimensional case~\cite{Lifting}
and for irreducible varieties~\cite{Schost2003}. 

On the other hand, most of the algorithms computing triangular decompositions in the general case produce embedded components.
We are not aware of any easy way to delete all the embedded components afterward.
Moreover, the problem of checking inclusion between two algebraic sets defined by triangular sets is known as the algebraic version of the \emph{Ritt problem} (see \cite[p. 190]{Kolchin1973} and \cite[p. 44]{AlvandiThesis} for the algebraic version) and appears to be hard.
Embedded components make it impossible to directly apply the intrinsic degree bounds. 
The best known degree bounds for the polynomials in a triangular decomposition are essentially $D^{O(n^2)}$  \cite{GalloMishra1991,SzantoThesis,Amzallagetal2016} ($D$ is a bound on the total degrees of $f_1, \ldots, f_s$ ), 
which is not polynomial in the degree of the algebraic set represented by the triangular decomposition. 
As we show in the present paper, an irredundant triangular decomposition was needed to apply 
the intrinsic degree bounds of~\cite{DahanSchost} that are polynomial in the degree of the variety.
We note that one could achieve irredundant triangular decompositions by computing the irreducible components  of the variety and their Gr\"obner basis, which would allow one to factor out repeated and embedded components \cite{Wang92,Kalkbrener94}. 
However, this method is too expensive, for example, 
they require polynomial factorization and Gr\"obner basis computation 
with much higher worst-case degree bounds that we aim in this paper. 

We also mention that using random linear changes of the variables one can avoid embedded components and compute an irredundant equidimensional decomposition, as demonstrated in \cite{JeronimoSabia,Lecerf2003}. However, changing the coordinate system destroys the triangular structure in the original variables, and, in particular, does not allow to perform elimination of some of the original variables.  We  use \cite{JeronimoSabia} in the present paper as one of the subroutines, but in a way that our final output does not use coordinate transformations. 

As far as we know, irredundant decomposition using triangular sets, without random changes of variables,  
was not known previously. There are two difficulties:

\begin{enumerate}[leftmargin=\dimexpr\linewidth-8.2cm-\rightmargin\relax]
\vspace{-.1cm}
  \item  The first difficulty is to detect common irreducible components among triangular sets of the same dimension 
  but with different sets of {\em free variables} (see Definition~\ref{def:triang}),
  because even in the equidimensional case we may need to compute triangular sets using different sets of free variables. 
  As far as we know, there were no previous methods to detect if two such representations have a common irreducible component or not. 
  One of the main results of this paper is a new technique that ensures that triangular sets representing equidimensional 
  components with different sets of free variables have no common irreducible components 
  (see Step~\ref{step:avoid} of Algorithm~\ref{alg:main} and Lemma~\ref{lemma:irredundant}). 

  \item The second difficulty is to  factor out components that are embedded in higher dimensional irreducible components, 
  similarly as it is stated in the Ritt problem, mentioned above. 
  This problem has only been solved for triangular sets in one and two dimensions \cite{Chenetal2013,AlvandiThesis}. 
  The second result of this paper is that we show how to use the results in \cite{JeronimoSabia} and turn their 
  irredundant equidimensional decomposition into an irredundant triangular decomposition. 
  To do that, we use the zero-dimensional {\em equiprojectable} decomposition of \cite{Lifting} and the lifting techniques of \cite{Schost2003}. 
\end{enumerate}

\section{Main result}

For $T\subset\C[\x]$, $Z(T)\subset \C^n$ denotes the set of common roots of $T$. For $V\subset \C^n$, $I(V)$ denotes the set of polynomials in $\C[\x]$ vanishing on $V$.
We recall some definitions from~\cite{Hubert1}.

\begin{definition}\label{def:triang}
  A set of polynomials $\Delta$ of the form 
  \begin{equation}\label{eq:Delta}
  \Delta = \{g_1({\bf y}, z_1), g_2({\bf y}, z_1, z_2), \ldots,g_m({\bf y}, z_1, \ldots, z_m)\}\subset \C[{\bf x}],
  \end{equation}
  where ${\bf  y}= y_1, \ldots, y_d$,  $\{x_1, \ldots, x_n\}=\{{\bf y}, z_1, \ldots, z_m\}$,  $\;d+m=n$, and $g_i$ involves $z_i$ for every $1 \leqslant i \leqslant m$
  is said to be a \emph{triangular set}. 
  
  The variables $\mathbf{y}$ are called \emph{free variables}.
  For every $1 \leqslant i \leqslant m$, $z_i$ is said to be \emph{the leader} of $g_i$, and
  we denote the leading coefficient of $g_i$, viewed as a univariate polynomial in $z_i$, by ${\rm lc}(g_i)$.
  Let $I_{\Delta}:=\{{\rm lc}(g_k)\;:\; k=1, \ldots, m\}$.
  The ideal generated by  $\Delta$ in $\C[\x]$ is denoted by $(\Delta)$, and the ideal ``pseudo-generated" by $\Delta$ is the saturated ideal
  \[
    \I(\Delta):=(\Delta):I_\Delta^\infty.
  \]
  $\Rep(\Delta)$ denotes the affine variety represented by a triangular set $\Delta$, defined as 
  \[
  \Rep(\Delta):=Z(\I(\Delta))=\overline{Z(\Delta)\setminus Z(\prod_k {\rm lc}(g_k))}\subset \C^{n}. 
  \]
\end{definition}

\begin{definition}
  The triangular set~\eqref{eq:Delta} is called a {\em regular chain} if
  \begin{itemize}
    \item $\deg_{z_i}(g_j)<\deg_{z_i}(g_i)$ for every $1 \leqslant i < j \leqslant m$;
    \item $\lc(g_k)$ is not a zero divisor in $\C[\y,z_1, \ldots, z_{k-1}]/\I(g_1, \ldots, g_{k-1})$ for every $1 \leqslant k \leqslant m$.
  \end{itemize}
  
  The regular chain $\Delta$ is called a {\em square-free regular chain} if it is a regular chain and 
  $g_k$ is a square-free polynomial in $z_k$ over $\C[\y,z_1, \ldots, z_{k-1}]/\I(g_1, \ldots, g_{k-1})$
  for every $1 \leqslant k \leqslant m$.
\end{definition}

The main result of the paper is the following.

\begin{theorem}\label{theorem:main}
  Let $W = Z(f_1, \ldots, f_s)$ for $f_i \in \mathbb{C}[x_1, x_2, \ldots, x_n]$ for $i = 1, \ldots s$. 
  Assume that the total degree of $f_i$ does not exceed $D \geqslant 2$ for every $1 \leqslant i \leqslant s$.
  We give a randomized algorithm (Algorithm~\ref{alg:main}) that computes  
  \[
  \mathbf{T} = \left\{\Delta_{i}\;:\; 1 \leqslant i \leqslant N \right\}
  \]
  such that 
  \begin{enumerate}
    \item \label{theorem(1)}$\Delta_{i}$ is a {\em square-free regular chain} for every $1 \leqslant i \leqslant N$;
    \item \label{theorem(2)} None of the irreducible components of $\;\Rep(\Delta_{i})$ is contained in $\;\Rep(\Delta_{j})$ for $i \neq j$;
    \item \label{theorem(3)}$W = \bigcup\limits_{i = 1}^N \Rep(\Delta_i)$;
    \item \label{theorem(4)} For every $1 \leqslant i \leqslant N$ and every $g \in \Delta_i$, the total degree of $g$ with respect to the free variables does not exceed $(\deg W)^2$ and the degree with respect to every other variable does not exceed $\deg W$;
    \item\label{theorem(5)} All polynomials appearing in Algorithm~\ref{alg:main} have total degrees  bounded by 
    \[
    \max\left( (n+1)D^{n+1}, D^{2n} + D^n\right);
    \]
    \item\label{theorem(6)} Assuming that in Algorithm~\ref{alg:main} we make random choices independently and uniformly
    from a finite subset $\Gamma\subset\C$,  
    the probability that the output of Algorithm~\ref{alg:main} is correct is at least 
    \[
      1 - \frac{cD^{n^2 + n} + (n + 1)^4D^{c'(n + 1)}}{|\Gamma|}
    \]
    for some constants $c$ and $c'$.
  \end{enumerate}
  
\end{theorem}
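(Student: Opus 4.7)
The plan is to establish the six claims in sequence, leaning on the structural decomposition that Algorithm~\ref{alg:main} performs. Reading between the lines of the introduction, I expect the algorithm to proceed in two stages: first, reduce $W$ to an irredundant equidimensional decomposition using~\cite{JeronimoSabia}, which may use a random linear change of variables internally but whose output lives in the original coordinates; second, convert each equidimensional piece into a finite union of square-free regular chains by combining the zero-dimensional equiprojectable decomposition of~\cite{Lifting} at a generic specialization of free variables with the Hensel lifting of~\cite{Schost2003} back to the free variables. The critical design choice is Step~\ref{step:avoid}, whose role — formalised by Lemma~\ref{lemma:irredundant} — is to guarantee that two regular chains produced for the same equidimensional component but with different sets of free variables do not end up sharing an irreducible component.

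For claims~(\ref{theorem(1)}), (\ref{theorem(3)}), and~(\ref{theorem(4)}): the square-free regular chain property~(\ref{theorem(1)}) follows from lifting a square-free zero-dimensional fiber, while the covering property~(\ref{theorem(3)}) follows by taking the union over all equidimensional pieces produced by~\cite{JeronimoSabia} and noting that the lifting recovers exactly the corresponding components. Claim~(\ref{theorem(4)}) is then a direct application of the intrinsic Dahan--Schost bounds~\cite{DahanSchost}: because each $\Rep(\Delta_i)$ is equidimensional and unmixed, the relevant bounds give total degree at most $(\deg W)^2$ in the free variables and degree at most $\deg W$ in each leader. The essential observation is that applying these intrinsic bounds requires us to know that the output is already irredundant, so claims~(\ref{theorem(2)}) and~(\ref{theorem(4)}) are intertwined.

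The main obstacle is claim~(\ref{theorem(2)}). There are really two redundancies to rule out: (a)~two regular chains with the \emph{same} free variables but coming from distinct equidimensional pieces could share an irreducible component; and (b)~two chains with \emph{different} free variables could both cover a common irreducible subvariety of $W$. Difficulty~(a) is handled by feeding the triangulation stage with the irredundant equidimensional decomposition of~\cite{JeronimoSabia}, which by construction has no common irreducible components across strata. Difficulty~(b) is precisely what Step~\ref{step:avoid} addresses, and the proof of the claim reduces to invoking Lemma~\ref{lemma:irredundant}. Avoidance of embedded components — the second difficulty flagged in the introduction — is subsumed here because the equidimensional decomposition is free of embedded components.

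Finally, for claims~(\ref{theorem(5)}) and~(\ref{theorem(6)}): the worst-case degree $\max\bigl((n+1)D^{n+1},\,D^{2n}+D^n\bigr)$ should come from tracking degrees through the subroutines, with the $(n+1)D^{n+1}$ contribution attributable to~\cite{JeronimoSabia} and the $D^{2n}+D^n$ contribution to resultant/subresultant style operations used to build the triangular sets from inputs whose degrees are already as large as $D^n$ by B\'ezout. The probability estimate is then obtained by a union bound over every random choice made: linear changes of variables, specialization points for the fibers, and auxiliary parameters in the lifting and equiprojectable decomposition steps. Each individual failure event is the vanishing of an explicit discriminant-like polynomial whose degree is governed by the expressions in~(\ref{theorem(5)}), so applying Schwartz--Zippel to each and summing yields the claimed bound with explicit constants $c$ and $c'$.
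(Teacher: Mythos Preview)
Your high-level architecture is right, and you correctly locate the irredundancy argument in Lemma~\ref{lemma:irredundant} and Step~\ref{step:avoid}. But you have omitted a structural piece of the algorithm that the paper's proof actually leans on: the construction of the ``univariate'' cover $\nabla_S$ via Canny's generalized resultants (Step~\ref{step:Canny}). You describe the algorithm as ``equidimensional decomposition, then triangularize each piece by specializing free variables and lifting,'' but in fact $\operatorname{TriangularZeroDim}$ is not fed $\mathbf{p}_d$ alone; it is fed $\nabla_S\cup\mathbf{p}_d$. The polynomials $g_{S,j}$ are (specialized) Canny resultants, each lying in $\C[x_{i_j},\mathbf{x}_{\overline{S}}]$, so the Jacobian of the first $m$ inputs with respect to $\mathbf{x}_S$ is \emph{diagonal} and invertible at every solution. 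This is exactly what the paper uses to verify the input specification of $\operatorname{TriangularZeroDim}$ and hence to prove~(\ref{theorem(1)}); with only $\mathbf{p}_d$ you would have no control over that Jacobian. The same univariate-in-one-leader structure, together with the specialization at $\boldsymbol{\alpha}$ in Step~\ref{step:avoid}, is what drives the transcendence-basis counting in the proof of Lemma~\ref{lemma:irredundant} (showing $|\overline{S}\cap\{k,\ldots,n\}|=|\overline{S'}\cap\{k,\ldots,n\}|$ for all $k$, hence $S=S'$). So while you are right that Step~\ref{step:avoid} is the crux, your proposal does not yet contain the mechanism that makes it work.

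This omission also causes a concrete misattribution in~(\ref{theorem(5)}): the $(n+1)D^{n+1}$ term does not come from~\cite{JeronimoSabia} --- their output polynomials have degree at most $\deg W_\ell\leqslant D^n$ --- but from the degree bound on Canny's perturbed resultant in Step~\ref{step:Canny}. The $D^{2n}+D^n$ term comes from the Hensel lifting in $\operatorname{TriangularZeroDim}$, matching the output bounds of~(\ref{theorem(4)}), not from subresultant manipulations. Your treatment of~(\ref{theorem(4)}) and~(\ref{theorem(6)}) is otherwise in line with the paper (Dahan--Schost plus a uniqueness argument for the regular chain; Schwartz--Zippel over the bad loci of each randomized step), and your dichotomy for~(\ref{theorem(2)}) is essentially the paper's Case~1/Case~2 split, though note that ``same free variables'' already forces the same dimension, so your case~(a) is really about chains from the same call to $\operatorname{TriangularZeroDim}$, handled by its output specification.
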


\begin{remark}
 One can check that our algorithm uses only gcd computation and linear algebra, so if the input polynomials are over a subfield $k \subset \C$ (for example, $k = \mathbb{Q}$), then the output polynomials will be also over this subfield. 
 \end{remark}


\section{The Toolbox}

\subsection{Notation} \label{notation:1}

Let $\x=(x_1, \ldots,x_n)$, and for a subset $S  = \{i_1, \ldots, i_m \} \subset \{ 1, \ldots, n\}$, denote  $\mathbf{x}_{S} := ( x_{i_1}, \ldots, x_{i_m}) $ with $i_1<i_2<\cdots<i_m$.


\subsection{Equidimensional decomposition}\label{subsec:JeronimoSabia}
We use the method  in~\cite{JeronimoSabia} to eliminate embedded components in our  Main Algorithm.  The main idea of~\cite{JeronimoSabia} to avoid embedded components is to represent each equidimensional part as the intersections of   $n+1$ hypersurfaces, each a Chow form with respect to a random coordinate system. Then they use the equations of the higher dimensional parts to factor out lower dimensional embedded components. Algorithm \ref{alg:equidim} below is the input and output specification of the algorithm in~\cite{JeronimoSabia}. 

{\small\begin{algorithm}[h]
\caption{$\operatorname{EquiDim}(f_1, \ldots, f_s)$}\label{alg:equidim}
\begin{description}[leftmargin=\dimexpr\linewidth-8cm-\rightmargin\relax]
  \item[Input]  $f_1, \ldots, f_s \in \mathbb{C}[x_1, \ldots, x_n]$, defining an algebraic variety $W = Z(f_1, \ldots, f_s)$ and a real number $0 < p < 1$.

 \item[Output]  The sets  $$\mathbf{p}_0 = \{p_{0, 0}, \ldots, p_{0, n}\}, \ldots,\mathbf{p}_{n - 1} = \{p_{n - 1, 0}, \ldots, p_{n - 1, n}\}$$
of polynomials in $\mathbb{C}[x_1, \ldots, x_n]$ represented by straight-line programs such that with probability at least $p$ the following holds
\begin{itemize}[leftmargin=\dimexpr\linewidth-8cm-\rightmargin\relax]
  \item the set $Z(p_{\ell, 0}, \ldots , p_{\ell, n})\subset \C^n$ is exactly $W_\ell$, that is the union of all irreducible components of $W$ of dimension $\ell$, 
  for all $0 \leqslant \ell \leqslant n - 1$;
  \item $\deg p_{\ell, i} \leqslant \deg W_\ell$ for all $0 \leqslant \ell \leqslant n - 1$ and $0 \leqslant i \leqslant n$.
\end{itemize}
\end{description}
\end{algorithm}}


\subsection{Canny's generalized resultant}\label{subsec:Canny}
Consider polynomials $f_1, \ldots, f_{n + 1} \in \mathbb{C}[\mathbf{x}, \mathbf{y}]$, 
where $\mathbf{x} = (x_1, \ldots, x_n)$ and $\mathbf{y} = (y_1, \ldots, y_m)$.
Let $\deg f_i \leqslant D$ for every $1 \leqslant i \leqslant n+1$.
By $\pi\colon \mathbb{C}^{m + n} \to \mathbb{C}^m$ we denote the projection of the $(\mathbf{x}, \mathbf{y})$-space onto the $\mathbf{y}$-coordinates.
Then the construction of the generalized perturbed resultant proposed in~\citep{Canny} provides 
a non zero polynomial $\operatorname{PRes}_{\mathbf{x}}(f_1, \ldots, f_{n + 1}) \in \mathbb{C}[\mathbf{y}]$ such that
\begin{itemize}[leftmargin=\dimexpr\linewidth-8.3cm-\rightmargin\relax]
  \item\label{Canny:vanishing} $\operatorname{PRes}_{\mathbf{x}}(f_1, \ldots, f_{n + 1})$ vanishes on $\pi(C)$ for every irreducible  component $C \subset Z(f_1, \ldots, f_{n+1})$ with $\overline{\pi(C)} \neq \mathbb{C}^m$;
  \item $\deg \operatorname{PRes}_{\mathbf{x}}(f_1, \ldots, f_{n + 1}) \leqslant (n + 1)D^{n + 1}$ 
  (this follows from the degree bound for multivariate resultants~\citep[Proposition~1.1]{GKZ});
  \item $\operatorname{PRes}_{\mathbf{x}}(f_1, \ldots, f_{n + 1})$ can be computed using one multivariate resultant computation for $n + 1$ polynomials of degree at most $D$.  
\end{itemize}
         
\subsection{Triangular Decomposition over Fraction Fields}\label{subsec:triangularzerodim}

A randomized algorithm $\operatorname{TriangularZeroDim}$ with the following specifications 
will be used as a subroutine in Algorithm~\ref{alg:main}.
One possible way to design such an algorithm  is based on the equiprojectable triangular decomposition algorithm of \cite{Lifting} and is described  in Section~\ref{sec:lifting}.
There are also other possibilities such as the $\operatorname{unmixed}$ procedure from~\cite{SzantoThesis}.\\

{\small\begin{algorithm}[h]
\caption{$\operatorname{TriangularZeroDim}(S, \{h_1, \ldots, h_\ell\})$}\label{alg:triangularzerodim}
\begin{description}[leftmargin=\dimexpr\linewidth-8cm-\rightmargin\relax]
  \item[Input] 
  \begin{itemize}[leftmargin=\dimexpr\linewidth-7.8cm-\rightmargin\relax]
  \item a proper subset $S := \{i_1, \ldots, i_m\} \subset \{1, \ldots, n\}$ with $i_1 < \ldots < i_m$ 
    \item polynomials $h_1, \ldots, h_\ell \in \mathbb{C}[\mathbf{x}]$ with $\ell \geqslant m$ such that
    \begin{enumerate}[leftmargin=\dimexpr\linewidth-7.5cm-\rightmargin\relax]
      \item the ideal $I$ generated by $h_1, \ldots, h_\ell$ in $\mathbb{C}(\mathbf{x}_{\overline{S}})[\mathbf{x}_{S}]$ is zero dimensional
      \item the Jacobian of $h_1, \ldots, h_m$ with respect to $\mathbf{x}_S$ is invertible at every solution of $I$ in the algebraic closure $\overline{\mathbb{C}(\mathbf{x}_{\overline{S}})}$
    \end{enumerate}
  \end{itemize}
  
  \item[Output] Square-free regular chains $\Delta_1, \ldots, \Delta_q \subset \mathbb{C}[\mathbf{x}]$ such that
  \begin{enumerate}[leftmargin=\dimexpr\linewidth-8cm-\rightmargin\relax]
    \item leaders of $\Delta_j$ are $x_{i_1}, \ldots, x_{i_m}$ for every $1 \leqslant j \leqslant q$
    \item $\Rep(\Delta_i)$ and $\Rep(\Delta_j)$ do not have common irreducible components for every $1 \leqslant i < j \leqslant q$
    \item $\bigcap\limits_{i = 1}^q \I(\Delta_i)\cdot \mathbb{C}(\mathbf{x}_{\overline{S}}) = \sqrt{I}$
    \item for every $g \in \Delta_1 \cup \ldots \cup \Delta_q$, the coefficients of $g$ considered as a polynomial in $\mathbf{x}_S$ are coprime. 
  \end{enumerate}
\end{description}
\end{algorithm}}

Note that the algorithm in Section~\ref{sec:lifting} returns triangular sets such that the leading coefficients of their elements belong to $\C[\mathbf{x}_{\overline{S}}]$. This property is not needed in the proof of correctness of Algorithm~\ref{alg:main}, but we use it to prove our degree bounds. 


\section{The main algorithm} 

See our Main Algorithm, Algorithm~\ref{alg:main}, below. 

{\small \begin{algorithm}[h]
\caption{Main Algorithm}\label{alg:main}
\begin{description}
  \item[Input] $f_1, \ldots, f_s \in \mathbb{C}[x_1, x_2, \ldots, x_n]$ defining the affine variety $W=Z(f_1, \ldots, f_s)$.
  \item[Output] Representation of $W$ as a union of varieties defined  by square-free regular chains, as described in Theorem~\ref{theorem:main}.
\end{description}

\vspace{3mm}

\begin{enumerate}[leftmargin=\dimexpr\linewidth-8cm-\rightmargin\relax]

  \item \label{step:equidim}\emph{Compute the equidimensional decomposition.} 
  For every $0 \leqslant d < n$, we call the subroutine $\operatorname{EquiDim}(f_1, \ldots, f_s)$ described in Section~\ref{subsec:JeronimoSabia} to compute a set of polynomials 
  \[
  \mathbf{p}_d := \{p_{d, 0}, \ldots, p_{d, n }\}
  \]
  such that 
  \[
  Z(\mathbf{p}_d) = W_d,
  \]
  where $W_d$ is the union of  irreducible components of dimension $d$ in $W$. 
  Let $d_0$ and $d_1$ be the minimal and maximal dimensions, respectively.
  
  \item \emph{Compute a cover by ``univariate'' triangular sets.}\label{step:cover}
  \begin{enumerate}[leftmargin=\dimexpr\linewidth-7.5cm-\rightmargin\relax]
 
    \item\label{step:square} \emph{Square the system.} Let 
    \[
    \widetilde{f}_i := \lambda_{i, 1}f_1 + \ldots + \lambda_{i, s} f_s \text{ for } 1 \leqslant i \leqslant n-d_0+1,
    \]
    where $\lambda_{i, j}$ is chosen uniformly random  from a finite set $\Gamma\subset \C$.
    \item \emph{Compute projections.}\label{step:Canny} For each $S \subset \{1, \ldots, n\}$ such that $n-d_1-1\leq |S|\leq n-d_0-1$, compute Canny's generalized resultants
    \[
    \hat{g}_S := \operatorname{PRes}_{\mathbf{x}_S}(\widetilde{f}_1, \ldots, \widetilde{f}_{m })\in \C[\mathbf{x}_{\overline{S}}] \subset \C[\mathbf{x}],
    \]
    defined  in Section \ref{subsec:Canny}, where $|S|=m-1$.
    \item \emph{Define the cover.} For each $S \subset \{1, \ldots, n\}$ such that $n-d_1\leq |S|\leq n-d_0$ define
    \[
    \widehat{\nabla}_S := \{ \hat{g}_{S_1}, \ldots, \hat{g}_{S_m} \},
    \]
    where $S = \{i_1, \ldots, i_m\} \subset \{1, \ldots, n\}$ with $i_1 < \ldots < i_m$, and $S_j := S \setminus \{ i_j \}$
    \item \label{step:avoid} \emph{Avoid repetitions.} 
    Choose a random point $\boldsymbol{\alpha} := (\alpha_1, \ldots, \alpha_n) \in \Gamma^n$ such that  $\hat{g}_S(\boldsymbol{\alpha})\neq 0$ for all $S\subset\{1, \ldots, n\}$.
    
    For every $n - d_1 \leqslant m \leqslant n - d_0$, for every subset $S = \{i_1, \ldots, i_m\} \subset \{1, \ldots, n\}$ with $i_1 < \ldots < i_m$, let
    \[
      \nabla_S := \{g_{S, 1}, \ldots, g_{S, m}\},
    \]
    where $g_{S, j}$ is the squarefree part of  $$\hat{g}_{S_j} \left(\alpha_1, \ldots, \alpha_{i_j - 1}, x_{i_j}, \ldots, x_n\right)$$ for $1 \leqslant j \leqslant m$, considered as a univariate polynomial in $x_{i_j}$. 
  \end{enumerate}
  
  \item \label{step:final} \emph{Compute the result.} Return
  \[
\bigcup_{d=d_0}^{d_1} \; \bigcup\limits_{\substack{S \subset \{1, \ldots, n\},\\ |S|=n-d}} \operatorname{TriangularZeroDim}(S, \nabla_S \cup \mathbf{p}_{d})
  \]
  The subroutine $\operatorname{TriangularZeroDim}$ is described in Section~\ref{subsec:triangularzerodim}.
\end{enumerate}
\end{algorithm}
}


\begin{proof}[{\bf  Proof of Theorem~\ref{theorem:main} \eqref{theorem(1)}-\eqref{theorem(5)}.}]
Denote by $\{ \Delta_1, \ldots, \Delta_N \}$ the output of Algorithm~\ref{alg:main}. 
We make the following assumptions on random choices made in Algorithm~\ref{alg:main} (the probabilities will be estimated in the proof of~\eqref{theorem(6)} in Section \ref{sec:lifting}):
\begin{enumerate}[label=A\arabic*]
  \item\label{A1} The choice of $\lambda_{i, j}$ in Step~\ref{step:square} satisfies the following property: for every $-1 \leqslant d \leqslant n - 1$,  $Z(\widetilde{f}_1, \ldots, \widetilde{f}_{n - d})$ and $W$ have the same irreducible components of dimensions larger than $d$.
  \item\label{A2} The point $\boldsymbol{\alpha}$ in Step~\ref{step:avoid} is chosen such that $\hat{g}_S(\boldsymbol{\alpha}) \neq 0$  for all $S\subset\{1, \ldots, n\}$.
\end{enumerate}

\underline{Proof of~\eqref{theorem(1)}.} The fact that $\Delta_i$ is a squarefree regular chain 
would follow from the specification of $\operatorname{TriangularZeroDim}$ if we show that the input specification of $\operatorname{TriangularZeroDim}$ is satisfied. We fix $S \subset \{1, \ldots, n\}$ of cardinality $n - d$.
Then the first $n - d$ polynomials in the input of
\[
\operatorname{TriangularZeroDim}(S, \nabla_S \cup \mathbf{p}_d)
\]
in Step~\ref{step:final} are $\nabla_S$.
These polynomials already generate a zero-dimensional ideal in $\C(\mathbf{x}_{\overline{S}})[\mathbf{x}_S]$, so $\nabla_S \cup \mathbf{p}_d$ also do.
Since $g_{S, j}$ belongs to $\C[x_{i_{j}}, \mathbf{x}_{\overline{S}}]$, the Jacobian of $\nabla_S$ with respect to $\mathbf{x}_{S}$ is a diagonal matrix.
Moreover, since every $g_{S, j}$ is squarefree, the matrix is invertible at every solution of $(\nabla_S)$ in $\overline{\C(\mathbf{x}_{\overline{S}})}$.

\underline{Proof of~\eqref{theorem(2)}.} Let $C$ be an irreducible component of $\Rep(\Delta_i)$. Below, 
Lemma~\ref{lemma:inW}  implies that $C$ is an irreducible component of $W$.
Using~\ref{A1} and Lemma~\ref{lemma:irredundant}, we conclude that $C$ is contained in $\Rep(\Delta_j)$ only for $j = i$. 
This proves the statement.

\underline{Proof of~\eqref{theorem(3)}.}
Lemma~\ref{lemma:inW} implies that $\bigcup\limits_{i = 1}^N\Rep(\Delta_i) \subset W$.
Lemma~\ref{lemma:irredundant} together with~\ref{A1} imply that $W \subset \bigcup\limits_{i = 1}^N\Rep(\Delta_i)$.

\underline{Proof of~\eqref{theorem(4)}.}
Fix $S = \{i_1, \ldots, i_m\}$ with $i_1 < \cdots < i_m$, and consider the coordinate system 
$\x_S=(x_{i_1}, \ldots, x_{i_{m}})=:(z_1, \ldots, z_m)$  and $\y:=\x_{\overline{S}}$. 
First we prove that if $V$ is represented by a square-free regular chain in the fixed coordinate system, then this square-free regular chain   
representing $V$ is unique, as long as the leading coefficients are in $\C[\y]$ and the coefficients in $\C[\y]$ of each polynomials are relatively prime. This is because for any such square-free regular chain, after dividing by the leading coefficients, we get a reduced Gr\"obner basis with respect to the lexicographic monomial ordering with $z_1<\cdots<z_m$ of the  ideal generated by $I(V)$ in the ring $\C(\y)[\z]$. Since the reduced  Gr\"obner basis of an ideal with a fixed monomial ordering is unique, using the assumption that the coefficients of each polynomial in the regular chain   are relatively prime, we get that the square-free regular chain representing $V$ that satisfy the above conditions is unique. 

Statement~\eqref{theorem(4)} of Theorem~\ref{theorem:main} follows from the fact that, as described in Section \ref{sec:lifting}, $\operatorname{TriangularZeroDim}(S,\nabla_S \cup \mathbf{p}_{d})$ 
returns a set $\{\Delta_a, \ldots, \Delta_b\}$ of square-free regular chains  such that the leading terms are in $\C[x_{\overline{S}}]$ and 
the coefficients of the polynomials in the triangular sets are relatively prime. 
Moreover, the output of $\{\Delta_a, \ldots, \Delta_b\}$ is an irredundant triangular decomposition of $W_S$, where $W_S$ is the union of all irreducible components $C$ of $W$ of 
co-dimension $m$ such that $\x_{\overline{S}}$ is the maximal subset of $\{x_1, \ldots, x_n\}$ among the subsets of free variables for $C$, 
with respect to the lexicographic ordering of the variables  $x_1,\ldots, x_n$.  
So for each $i=a, \ldots, b$,  $V := \Rep(\Delta_i)$ is a disjoint union of irreducible components of $W_S$. 
Finally we note that $\Delta_i=\{g_1, \ldots, g_m\}$ is the unique square-free regular chain representing $V$ with leading coefficients in $\C[\y]$, 
so we can apply the degree bounds proved in \cite[Theorem 2]{DahanSchost} to get for $k=1, \ldots, m$ 
\begin{eqnarray}\label{freedegree}
\deg_\y(g_k)\leq \left(1+2\sum_{i\leq k-1}(d_i-1)\right)\deg(V_k)\leq \deg(W_{S,k})^2\
\end{eqnarray}
where $d_i:=\deg_{z_i}(g_i)$, and $V_k$ ($W_{S,k}$) is the projection of $V$ ($W_S$) to the coordinates $(\y, z_1, \ldots, z_k)$.
Since $\deg(W_{S,k})\leqslant \deg(W)$, we get the desired bound for the free variable.

For the non-free variables, we use inequalities 
\[
\prod\limits_{i = 1}^m \deg_{z_i} g_i \leqslant \deg W \text{ and } \deg_\mathbf{z} g_k \leqslant \sum\limits_{i = 1}^m \deg_{z_i} g_i - m + 1
\]
to deduce $\deg_{\mathbf{z}} g_k \leqslant \deg W$ for $k = 1, \ldots, m$.

\underline{Proof of~\eqref{theorem(5)}.}
Due to Section~\ref{subsec:JeronimoSabia}, the degrees of the polynomials appearing in Step~\ref{step:equidim} of the algorithms do not exceed $\deg W \leqslant D^n$.

The bounds on the degrees of the polynomials in Step~\ref{step:cover} of the algorithm are bounded by $(n+1)D^{n+1}$, the bound on the degree of Canny's generalized resultant (see Section~\ref{subsec:Canny}).

As we show in Section~\ref{sec:lifting}, we can use Hensel lifting in the $\x_{\overline{S}}$ variables to compute the output of 
$\operatorname{TriangularZeroDim}(S,\nabla_S \cup \mathbf{p}_{d})$, thus the $\x_{\overline{S}}$-degrees and $\x_{{S}}$-degrees of the polynomials computed in this subroutine  
do not exceed the bounds $D^{2n}$ and $D^n$, respectively, stated in~\eqref{theorem(4)}.
\end{proof}

 \begin{lemma}\label{lemma:inW} Let $C$ be an irreducible component of $\;\Rep(\Delta_i)$ for some $1 \leqslant i \leqslant N$. 
  Then $C$ is an irreducible component of $W$.
  \end{lemma}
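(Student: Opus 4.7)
The plan is to work backwards from $C$ through the construction: since $C$ is an irreducible component of $\Rep(\Delta_i)$, and $\Delta_i$ is one of the square-free regular chains output by $\operatorname{TriangularZeroDim}(S, \nabla_S \cup \mathbf{p}_d)$ for some $S \subset \{1, \ldots, n\}$ with $|S| = n-d$, I first want to show the set-theoretic containment $\Rep(\Delta_i) \subset Z(\nabla_S \cup \mathbf{p}_d)$ in $\mathbb{C}^n$, and then pin down the dimension of $C$.

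For the containment, I would use property~(3) of the output specification of $\operatorname{TriangularZeroDim}$: $\bigcap_j \I(\Delta_j)\cdot \mathbb{C}(\mathbf{x}_{\overline{S}}) = \sqrt{I}$, where $I$ is the ideal generated by $\nabla_S \cup \mathbf{p}_d$ in $\mathbb{C}(\mathbf{x}_{\overline{S}})[\mathbf{x}_S]$. This implies $I \subset \sqrt{I} \subset \I(\Delta_i)\cdot \mathbb{C}(\mathbf{x}_{\overline{S}})$. Clearing denominators and using that the leaders of $\Delta_i$ are precisely $\mathbf{x}_S$ (so every irreducible component of $\Rep(\Delta_i)$ projects dominantly onto the $\mathbf{x}_{\overline{S}}$-coordinates), for every $h \in \nabla_S \cup \mathbf{p}_d$ there is a nonzero $q(\mathbf{x}_{\overline{S}})$ with $q\cdot h \in \I(\Delta_i)$, so $h$ vanishes on the dense subset of $\Rep(\Delta_i)$ where $q \neq 0$, and hence everywhere by Zariski closure. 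Therefore $\Rep(\Delta_i) \subset Z(\nabla_S \cup \mathbf{p}_d) \subset Z(\mathbf{p}_d) = W_d \subset W$, and in particular $C \subset W$.

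Next, I would establish that $\dim C = d$. Since $\Delta_i$ is a regular chain with $m = |S| = n-d$ leaders in $\mathbf{x}_S$, the saturated ideal $\I(\Delta_i)$ is unmixed of dimension $d$, so $\Rep(\Delta_i)$ is equidimensional of dimension $d$ and thus $\dim C = d$.

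Finally, combining the two, $C$ is an irreducible, $d$-dimensional subvariety of $W_d$, and by the specification of $\operatorname{EquiDim}$ (under assumption~\ref{A1} in the proof of Theorem~\ref{theorem:main}) the set $W_d$ is exactly the union of the $d$-dimensional irreducible components of $W$. Hence $C$ is contained in one such component $W'$ of $W$ with $\dim W' = d = \dim C$, forcing $C = W'$, which completes the proof. The main obstacle is the first step: passing from the ideal-theoretic statement over the fraction field $\mathbb{C}(\mathbf{x}_{\overline{S}})$ given by $\operatorname{TriangularZeroDim}$ to a set-theoretic containment of affine varieties in $\mathbb{C}^n$; everything else is a clean dimension-count argument.
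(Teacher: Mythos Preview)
Your proposal is correct and follows essentially the same route as the paper. Both arguments (i) use the output specification of $\operatorname{TriangularZeroDim}$ to get $q\cdot p_{d,j}\in\I(\Delta_i)$ for some nonzero $q\in\C[\mathbf{x}_{\overline{S}}]$, (ii) use that the leaders of $\Delta_i$ are $\mathbf{x}_S$ to conclude $p_{d,j}$ vanishes on $C$ (you phrase this geometrically via dominant projection and Zariski density, the paper phrases it algebraically via $I(C)$ prime with $I(C)\cap\C[\mathbf{x}_{\overline{S}}]=0$, citing \cite[Prop.~5.8]{Hubert1}), and (iii) finish with the dimension count $\dim C=d$ inside the equidimensional piece $W_d$. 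One small correction: the fact that $Z(\mathbf{p}_d)=W_d$ comes from the specification of $\operatorname{EquiDim}$, not from assumption~\ref{A1}, which concerns the random linear combinations $\widetilde f_i$.
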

  
  \begin{proof}
  Since $\Delta_i$ belongs to the output of Algorithm~\ref{alg:main}, $\Delta_i$ belongs to 
  \[
  \operatorname{TriangularZeroDim}(S, \{g_{S, 1}, \ldots, g_{S, m}\}\cup \mathbf{p}_{n - m})
  \]
  for some $S := \{i_1, \ldots, i_m\} \subset \{1, \ldots, n\}$.
  The specification of $\operatorname{TriangularZeroDim}$ implies that 
  \begin{enumerate}[leftmargin=\dimexpr\linewidth-8cm-\rightmargin\relax]
    \item\label{stat:dimension} $|\Delta_i| = |S|=m$, so $\dim C = n - m$ by~\cite[Theorem~4.4]{Hubert1};
    \item\label{stat:intersection} leaders of $\Delta_i$ are $x_{i_1}, \ldots, x_{i_m}$, so $I(C) \cap \mathbb{C}[\mathbf{x}_{\overline{S}}] = 0$ by~\cite[Proposition~5.8]{Hubert1};
    \item\label{stat:membership} the ideal generated by $\I(\Delta_i)$ in $\mathbb{C}(\mathbf{x}_{\overline{S}})[\mathbf{x}_S]$ contains $\mathbf{p}_{n - m}$.
  \end{enumerate}
  Due to~\eqref{stat:membership}, there exists $q_j \in \mathbb{C}[x_{\overline{S}}]$ such that $q_j p_{n - m, j} \in \I(\Delta_i) \subset I(C)$ for every $0 \leqslant j \leqslant n $.
  Since $I(C)$ is prime and $I(C) \cap \mathbb{C}[x_{\overline{S}}] = 0$ due to~\eqref{stat:intersection}, $p_{n - m, j} \in I(C)$ for every $0 \leqslant j \leqslant n $.
  Since $W_{n-m}=Z(\mathbf{p}_{n - m})$, we have  $C \subset W_{n - m}$.
  Moreover, $C$ is an irreducible component of $W_{n - m}$ since $\dim C = n - m$ due to~\eqref{stat:dimension}.
  This proves the lemma.
 \end{proof}

\begin{lemma} \label{lemma:irredundant}
Assume that Assumptions~\ref{A1} and \ref{A2} are satisfied.
Then for every irreducible component $C \subset W$, there exists a unique $1 \leqslant i \leqslant N$ such that $C \subset \Rep(\Delta_i)$. Moreover, $C$ is an irreducible component of  $\Rep(\Delta_i)$.
\end{lemma}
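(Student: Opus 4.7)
The plan is to associate each irreducible component $C$ of $W$ with a canonical subset $S_C \subset \{1, \ldots, n\}$, namely the complement of the lex-maximal free variable set $\overline{S_C}$ of $C$ (where a \emph{free variable set} means a subset of $\{1, \ldots, n\}$ of size $d = \dim C$ along which the projection of $C$ is dominant). I then show (i) $C$ lies in some $\Rep(\Delta_i)$ output by $\operatorname{TriangularZeroDim}(S_C, \nabla_{S_C} \cup \mathbf{p}_d)$ in Step~\ref{step:final}, and is in fact an irreducible component of it; (ii) $C$ is not contained in any $\Rep(\Delta_j)$ coming from $\operatorname{TriangularZeroDim}(S, \cdot)$ for any $S \neq S_C$. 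Combined with property~(2) of the $\operatorname{TriangularZeroDim}$ specification (distinct output chains do not share irreducible components), this yields existence, uniqueness, and the ``moreover'' clause of the lemma.

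Both (i) and (ii) reduce, via the specification $\bigcap_i \I(\Delta_i) \cdot \C(\mathbf{x}_{\overline{S}}) = \sqrt{I}$, to asking whether the generic fiber of $C$ over $\mathbf{x}_{\overline{S}}$ lies in $V(\nabla_S \cup \mathbf{p}_d)$ inside $\overline{K}^m$ for $K = \C(\mathbf{x}_{\overline{S}})$. Since $C \subset W_d = Z(\mathbf{p}_d)$, the factor $\mathbf{p}_d$ vanishes on this fiber automatically, and the whole question is about $\nabla_S$. The technical core is a matroid/lex-maximality dichotomy for the algebraic matroid of $C$: the lex-maximal $\overline{S_C}$ is characterized by the property that, for every $k$, the generic fiber coordinate $\xi_{i_k^{S_C}}$ is algebraic over $\C(\overline{S_C} \cap \{>i_k^{S_C}\})$; conversely, for any other free variable set $\overline{S}$ of $C$ at least one $\xi_{i_k^S}$ fails this, since otherwise the basis exchange axiom would let one swap an element of $\overline{S}$ with some $x_{i_k^S}$ of larger index and build a free variable set strictly lex-greater than $\overline{S}$.

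For (i), the Canny resultant property of Section~\ref{subsec:Canny} together with A1 gives $\hat{g}_{(S_C)_k}$ vanishing on $C$, hence the identity $\hat{g}_{(S_C)_k}(x_{\overline{S_C}}, \xi_{i_k}) = 0$ in $\overline{K}$. Because $\xi_{i_k}$ does not involve the variables $x_\ell$ with $\ell \in \overline{S_C}$ and $\ell < i_k$ (by the matroid dichotomy), specializing those variables to the $\alpha_\ell$'s of Step~\ref{step:avoid} is a well-defined ring substitution preserving the identity, so $g_{S_C,k}(\xi_{i_k}, x_{\overline{S_C} \cap \{>i_k\}}) = 0$ for every $k$. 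Thus the generic fiber of $C$ lies in $V(\nabla_{S_C} \cup \mathbf{p}_d)$, hence in the generic fiber of some $\Rep(\Delta_i)$, and taking Zariski closures in $\C^n$ gives $C \subset \Rep(\Delta_i)$. If $C$ were strictly contained in an irreducible component $C'$ of $\Rep(\Delta_i)$, then Lemma~\ref{lemma:inW} would make $C'$ a component of $W$ strictly containing the component $C$, which is impossible.

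For (ii), if $C \subset \Rep(\Delta_j)$ with $\Delta_j$ from $\operatorname{TriangularZeroDim}(S, \cdot)$, the same ``no strict containment'' argument forces $C$ to be an irreducible component of $\Rep(\Delta_j)$; property~(1) of the specification together with \cite[Proposition~5.8]{Hubert1} then give $|S| = n - d$ and make $\overline{S}$ a free variable set for $C$. If moreover $\overline{S} \neq \overline{S_C}$, the matroid dichotomy supplies $k$ with $\xi_{i_k^S}$ not algebraic over $L := \C(\overline{S} \cap \{>i_k^S\})$; but $g_{S,k}$ is a polynomial in $x_{i_k^S}$ with coefficients in $\C[\overline{S} \cap \{>i_k^S\}] \subset L$, nonzero by assumption A2, so $g_{S,k}(\xi_{i_k^S}) \neq 0$ in $\overline{K}$, contradicting membership of the generic fiber in $V(\nabla_S)$. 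The main obstacle throughout is the matroid/lex-maximality characterization of the second paragraph; once that is in hand, the rest amounts to tracing carefully through the subroutine specifications.
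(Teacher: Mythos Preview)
Your proposal is correct and follows essentially the same route as the paper: both proofs attach to each component $C$ the complement $S_C$ of the lex-maximal transcendence basis, use A1 plus the Canny property to get $\hat g_{(S_C)_k}\in I(C)$, and then use a basis-exchange argument to pass from $\hat g_{(S_C)_k}$ to its specialization $g_{S_C,k}$. The only noticeable differences are in packaging. For existence, the paper writes $\hat g_{S_k}=\sum c_i m_i$ in the low-index free variables and argues by contradiction that every $c_i\in I(C)$; you instead observe that $\xi_{i_k}$ is algebraic over $L=\C(\overline{S_C}\cap\{>i_k\})$ and view the vanishing as an identity in the polynomial ring $L(\xi_{i_k})[\mathbf{x}_{\overline{S_C}\cap\{<i_k\}}]$, which is the same computation read field-theoretically. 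For uniqueness, the paper shows that for \emph{both} $S$ and $S'$ the set $\overline S\cap\{k,\ldots,n\}$ is a transcendence basis of $\C[x_k,\ldots,x_n]$ modulo $I(C)$ for every $k$, forcing $S=S'$; you instead isolate the converse of the matroid characterization (if $\overline S$ is a non-lex-maximal free set, some $\xi_{i_k^S}$ is transcendental over $L$) and derive a direct contradiction with $g_{S,k}\in I(C)$ and A2. Either argument works; yours makes the matroid content more explicit, while the paper's ``count the transcendence degree at each level $k$'' avoids having to state and prove the converse direction separately. One small point to tighten: the claim that $\overline S$ is a free variable set for the specific component $C$ (not just for $\Rep(\Delta_j)$) needs the fact that $\I(\Delta_j)$ is the contraction of its extension to $\C(\mathbf{x}_{\overline S})[\mathbf{x}_S]$, so every minimal prime has trivial intersection with $\C[\mathbf{x}_{\overline S}]$; this is implicit in your citation of \cite[Prop.~5.8]{Hubert1} but worth making explicit.
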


\begin{proof}
  \textbf{Existence.}
  Let $C $ be an irreducible component of $W$,  $m := \codim C$ and $d := n - m$.
  Consider all subsets $\{ j_1, \ldots, j_d \} \subset \{1, \ldots, n\}$ such that the image of 
  $x_{j_1}, \ldots, x_{j_d}$ constitute a transcendence basis of $\mathbb{C}[\mathbf{x}]$ modulo $I(C)$.
  Among all these sets we find one for which the tuple $(j_1, \ldots, j_d)$ 
  (assuming that $j_1 < j_2 < \ldots < j_d$) is maximal with respect to the lexicographic ordering.
  By $S := \{i_1, \ldots, i_m\}$ we denote the complement to this subset in $\{1, \ldots, n\}$.
  
  Since $\dim C = d$ and $W \subset Z(\widetilde{f}_1, \ldots, \widetilde{f}_{n-d})$, our assumption that $Z(\widetilde{f}_1, \ldots, \widetilde{f}_{n-d})$ and $W$ have the same irreducible components of dimension larger than $d$ implies that $C$ is a (non-embedded) irreducible component of $Z(\widetilde{f}_1, \ldots, \widetilde{f}_{n-d})$. 
  Since $\dim C = d$, $C$ does not project dominantly 
  on the $\mathbf{x}_{\overline{S} \cup \{i_k\}}$-coordinates for  $1 \leqslant k \leqslant m$.
  Then the property~\ref{Canny:vanishing} of the Canny's resultant implies that 
  $\hat{g}_{S_k}$ (defined in Step \ref{step:Canny} with $S_k := S \setminus \{i_k\}$) 
  vanishes on $C$ for every $1 \leqslant k \leqslant m$.
  Consider $1 \leqslant k \leqslant m$. We will prove that $g_{S, k}$ (defined in  Step \ref{step:avoid}) vanishes on $C$.
  Let 
  \[
  B := \{ \ell \;|\; x_{\ell} \text{ appears in } \hat{g}_{S_k},\; \ell < i_k\}.
  \]
  If $B = \emptyset$, then $g_{S, k} = \hat{g}_{S_k}$ and vanishes on $C$.
  Otherwise, we write $\hat{g}_{S_k}$ as $\sum\limits_{i = 1}^M c_im_i$, where
  $m_0, \ldots, m_M$ are distinct monomials in $\mathbf{x}_B$ with $m_0 = 1$
  and $c_0, \ldots, c_M$ are polynomials in $\mathbb{C}[\mathbf{x}_{\overline{B}}]$.
  If all $c_0, \ldots, c_M$ vanish on $C$, then $g_{S, k}$ vanishes on $C$.
  Otherwise, there exists $1 \leqslant t \leqslant M$ such that $c_t$ does not vanish on $C$.
  Let $x_{j_\ell}$ be any variable in $m_t$, then $\hat{g}_{S_k} = 0$ is a nontrivial algebraic equation
  for $x_{j_\ell}$ over $\x_{\overline{S}} \setminus \{x_{j_\ell}\} \cup \{x_{i_k}\}$ modulo $I(C)$.
  Then, replacing $x_{j_\ell}$ with $x_{i_k}$, we obtain a lexicographically larger transcendence basis of $\mathbb{C}[\mathbf{x}]$
  modulo $I(C)$.
  Thus $g_{S, k}$ vanishes on $C$.
  
 Let 
 $
 \{\Delta_a, \ldots, \Delta_b\}= \operatorname{TriangularZeroDim}(S, \nabla_S\cup \mathbf{p}_d) 
$ for some  $1 \leqslant a \leqslant b \leqslant N$
 with $\nabla_S=\{g_{S, 1}, \ldots, g_{S, m}\}$. The specification of $\operatorname{TriangularZeroDim}$
  together with the fact that $\{ g_{S, 1}, \ldots, g_{S, m}\}\cup \mathbf{p}_d  \subset I(C)$ imply
  \[
  \bigcap\limits_{i = a}^b  \mathbb{C}(\mathbf{x}_{\overline{S}}) \I(\Delta_i)  = \sqrt{I} \subset  \mathbb{C}(\mathbf{x}_{\overline{S}})  I(C) .
  \]
  Since the latter ideal is prime, there exists $a \leqslant c \leqslant b$ such that 
  \[
  \mathbb{C}(\mathbf{x}_{\overline{S}}) \I(\Delta_c) \subset \mathbb{C}(\mathbf{x}_{\overline{S}}) I(C).
  \]
  Since $I(C) \cap \mathbb{C}[\mathbf{x}_{\overline{S}}] = 0$, we have $\I(\Delta_c) \subset I(C)$.
  Hence $C \subset \Rep(\Delta_c)$.
  Since $\dim C = \dim \Rep(\Delta_c)$, $C$ is an irreducible component of $\Rep(\Delta_c)$.
  The existence is proved.
  
 \noindent \textbf{Uniqueness.}
  Assume that $C \subset \Rep(\Delta_{c'})$ for some $c' \neq c$. Consider the following cases
  
 {\bf Case~1.} $\dim \Rep(\Delta_{c'})=D > d = \dim C$ Since $\Rep(\Delta_{c'}) \subset W_D$
    and $C$ is an irreducible  component of $W_d$ by Lemma~\ref{lemma:inW}, $C$ cannot be contained in $\Rep(\Delta_{c'})$.
    
{\bf Case~2.} $\dim \Rep(\Delta_{c'})=d = \dim C$
    Let
    \[
    \Delta_{c'} \subset \operatorname{TriangularZeroDim}(S', \{g_{S', 1}, \ldots, g_{S', m}\}\cup \mathbf{p}_d)
    \]
    for some $S' = \{i_1', \ldots, i'_m\} \subset \{1, \ldots, n\}$.
    Consider an arbitrary $1 \leqslant k \leqslant n$, and define $\overline{S}_k:=\overline{S} \cap \{k, \ldots, n\}$.
    Since $I(C)$ contains $g_{S, 1}, \ldots, g_{S, m}$, 
    $\x_{\overline{S}_k}$ is a transcendence basis
    of $\mathbb{C}[x_k, \ldots, x_n]$ modulo $I(C)\cap \C[x_k, \ldots, x_n]$, since $\x_{\overline{S}_k}$ is clearly algebraically independent modulo $I(C)\cap \C[x_k, \ldots, x_n]$, and for all $j\geq k$ such that $j\in S$ there exists $t\in \{1, \ldots, m\}$ such that $j=i_t$ and by the construction in Step~\ref{step:avoid} we have $g_{S,t}\in I(C)\cap \C[x_k, \ldots, x_n]$.
    Analogously, for $\overline{S'}_k:=\overline{S'} \cap \{k, \ldots, n\}$, $\x_{\overline{S'}_k}$ is a transcendence basis
    of $\mathbf{C}[x_k, \ldots, x_n]$ modulo $I(C)\cap \C[x_k, \ldots, x_n]$.
    Hence
    \[
    \left\lvert \overline{S} \cap \{k, \ldots, n\} \right\rvert = \left\lvert \overline{S'} \cap \{k, \ldots, n\} \right\rvert
    \]
    for every $1 \leqslant k \leqslant n$. Thus, $S = S'$. Then varieties $\Rep(\Delta_c)$
    and $\Rep(\Delta_{c'})$ do not have common irreducible components due to 
    the specification of $\operatorname{TriangularZeroDim}$.
 \end{proof}
 

\section{Zero-dimensional triangular decomposition over rational functions}
\label{sec:lifting}

In this section we describe a slight modification of the zero dimensional equiprojectable triangular decomposition algorithm of \cite{Lifting} that was given over the field $K=\Q$, while here we work over the field $K=\C(\y)$ for $\y=y_1, \ldots, y_d$. 

\subsection{Equiprojectable decomposition}

\begin{definition}[\cite{Lifting}, p. 109] Given $h_1, \ldots, h_\ell \in K[z_1, \ldots, z_m]$ where $K$ is a field (here we use both $K=\C(\y)$ and $K=\C$),  assume that 
\[
  V = Z(h_1, \ldots, h_\ell) \subset \overline{K}^m
\]
is zero-dimensional, where $\overline{K}$ is the algebraic closure of $K$. 
Consider $\pi\colon\overline{K}^n\rightarrow \overline{K}^{n-1}$ the projection onto the first $n - 1$ coordinates, 
and for each $x \in V$ let $N(x):=\#\pi^{-1}(\pi(x))$, the number of points in $V$ in the $\pi$-fiber of $x$. 
Then decompose $V = V_1 \cup \cdots \cup V_d$ such that $V_i:=\{x\in V\;:\;N(x)=i\}$ for $i = 1, \ldots d$. 
Apply this splitting process recursively to each $V_1, \ldots, V_d$, using the fibers of the successive projections 
$\overline{K}^n\rightarrow \overline{K}^i$ onto the first $i$ coordinates,  for $i = n-2, \ldots, 1$. 
Thus we obtain a decomposition of $V$ into pairwise disjoint varieties that are each {\em equiprojectable}, 
which form the {\em equiprojectable decomposition} of $V$.
\end{definition}

The reason we consider the equiprojectable decomposition is because each equiprojectable component of $V$ is representable by a single triangular set with coefficients in $K$ (c.f. \cite[Section~2]{Lifting}). We use this fact to ensure that when lifting the equiprojectable components from  $\C$ to $\C[[\y]]$ (see below), the resulting triangular sets are reconstructable over $K=\C(\y)$. 

The main idea for computing an equiprojectable decomposition, encoded by triangular sets,  of  a  zero-dimensional affine variety  defined by polynomials  ${\bf H}=\{h_1, \ldots, h_\ell\}\subset \C(\y)[\z]$, is first to specify the variables $\y$ in a random point $\y^\ast\in \C^d$ such that the equiprojectable decomposition of $Z({\bf H})\subset \overline{\C(\y)}^m$, described by triangular sets in $\C(\y)[\z]$,  specializes to the equiprojectable decomposition of $Z({\bf H}_{\y=\y^\ast})\subset\C^m$. Then we can lift each triangular set in the equiprojectable decomposition of $Z({\bf H}_{\y=\y^\ast})\subset\C^m$ to a triangular set in $\C(\y)[\z]$  in the equiprojectable decomposition of $Z({\bf H})\subset \overline{\C(\y)}^m$.

To compute the equiprojectable decomposition, encoded by triangular sets,  of  a  zero-dimensional affine variety $Z({\bf H}_{\y=\y^\ast})\subset\C^m$, we cite the algorithm outlined in \cite[Section 4.]{Lifting}. Namely, they first call the zero-dimensional triangularization algorithm of \cite{Maza2000}, followed by the Split-and-Merge algorithm of \cite[Section 2.]{Lifting}.

\subsection{Lifting and reconstructing} \label{sec:lifting}

The lifting algorithm is a (slight extension) of the lifting procedure from~\citep[Section~4.2]{Schost2003b}.
We will work in the ring $\mathbb{C}[\mathbf{y}, \mathbf{z}]$ with $\mathbf{y} = (y_1, \ldots, y_d)$ 
and $\mathbf{z} = (z_1, \ldots, z_m)$ with $d + m = n$.
 
For a single lifting step we restate the specification of~\cite[Algorithm~\sc{Lift}]{Schost2003b} as follows:
\vspace{-.1cm}

{\small \begin{algorithm}[h]
\caption{$\operatorname{Lift}(\mathbf{H}, \mathbf{y}^\ast, s, \widetilde{\Delta})$}\label{alg:lift}  
  \begin{description}
  \item[Input] $\;$\\
 \vspace{-.1cm} \begin{enumerate}[leftmargin=\dimexpr\linewidth-7.5cm-\rightmargin\relax]
   \item set of polynomials ${\bf H} = \{ h_1(\mathbf{y}, \mathbf{z}), \ldots, h_m(\mathbf{y}, \mathbf{z})\} \subset \C[\mathbf{y}, \mathbf{z}]$;
   \item a point $\mathbf{y}^\ast \in \mathbb{C}^d$;
   \item a nonnegative integer $s$;
   \item a regular chain $\widetilde{\Delta} = \{ \widetilde{g}_1(\y,z_1),\widetilde{g_2}(\y,z_1,z_2), \ldots, \widetilde{g}_m(\y,\mathbf{z})\} \subset \mathbb{C}[\mathbf{y} - \mathbf{y}^\ast, \mathbf{z}]$ such that 
     \begin{itemize}[leftmargin=\dimexpr\linewidth-7.5cm-\rightmargin\relax]
      \item ${\rm lc}(\widetilde{g}_k)=1$ for $k=1, \ldots, m$
      \item $\widetilde{g}_k$ is reduced modulo $\{\widetilde{g}_1, \ldots, \widetilde{g}_{k-1}\}$
      \item there exists  $\Delta = \{ g_1(\mathbf{y}, z_1), g_2(\mathbf{y}, z_1, z_2), \ldots, g_m(\mathbf{y}, \mathbf{z}) \} \subset \mathbb{C}(\mathbf{y})[\mathbf{z}]$ such that 
      \begin{itemize}
        \item every element of $\mathbf{H}$ can be reduced to zero using $\Delta$
         \item $\widetilde{\Delta} \equiv \Delta \pmod{ (\mathbf{y} - \mathbf{y}^\ast)^{2^s} }$
      \end{itemize}
      \item $\operatorname{jac}_{\mathbf{z}}(\bf{H}|_{\mathbf{y} = \mathbf{y}^\ast})$ is invertible modulo $\widetilde{\Delta}|_{\mathbf{y} = \mathbf{y}^\ast}$;
     \end{itemize}
 \end{enumerate}
  \item[Output]  Regular chain $\widehat{\Delta} = \{ \widehat{g}_1({\bf y}, z_1), \ldots, \widehat{g}_m({\bf y}, {\bf z})\}\subset \C[\mathbf{y} - \mathbf{y}^\ast, {\bf z}]$ satisfying
      \begin{itemize}[leftmargin=\dimexpr\linewidth-7.5cm-\rightmargin\relax]
       \item ${\rm lc}(\widehat{g}_k) = 1$ for $k = 1, \ldots, m$;
       \item $\widehat{\Delta} \equiv \Delta \pmod{(\mathbf{y} - \mathbf{y}^\ast)^{2^{s + 1}}}$; 
     \end{itemize}
  \end{description}
  \end{algorithm}}
  Then one can adapt the general strategy of the main algorithm from~\cite[p.~113]{Lifting} to generalize the algorithm from~\cite[p.~584]{Schost2003} as shown in Algorithm~\ref{alg:hensel}.
         
         \begin{algorithm}[h]
         \caption{$\operatorname{TriangularZeroDim}$ via Hensel lifting}\label{alg:hensel}
         \begin{description}
           \item[Input] Set of polynomials $\mathbf{H} = \{h_1(\mathbf{y}, \mathbf{z}), \ldots, h_\ell(\mathbf{y}, \mathbf{z})\} \subset \mathbb{C}[\mathbf{y}, \mathbf{z}]$ such that $\operatorname{jac}_{\mathbf{z}}(\mathbf{H}_0)$ is invertible at every solution of $\mathbf{H}$ in $\overline{\mathbb{C}(\mathbf{y})}$, where $\mathbf{H}_0 = \{ h_1(\mathbf{y}, \mathbf{z}), \ldots, h_m(\mathbf{y}, \mathbf{z}) \}$.
           
           \item[Output] Equiprojectable triangular decomposition of $\mathbf{H}$ 
         \end{description}
         
         \begin{enumerate}[leftmargin=\dimexpr\linewidth-7.8cm-\rightmargin\relax]
           \item Pick coordinates of $\mathbf{y}_1, \mathbf{y}_2 \in \mathbb{C}^d$ randomly from a finite $\Gamma \subset \C$
           \item Compute the equiprojectable decompositions $\widetilde{\Delta}_1, \ldots, \widetilde{\Delta}_N$ and $\overline{\Delta}_1, \ldots, \overline{\Delta}_N$ of $\mathbf{H}|_{\mathbf{y} = \mathbf{y}_1}$ and $\mathbf{H}|_{\mathbf{y} = \mathbf{y}_2}$, respectively
           \item $s = 0$
           \item While not $\operatorname{Stop}$
           \begin{enumerate}
             \item $\widetilde{\Delta}_i := \operatorname{Lift}(\mathbf{H}_0, \mathbf{y}_1, s, \widetilde{\Delta}_i)$ for every $1 \leqslant i \leqslant N$
             \item s := s + 1
             \item $\Delta_i := \operatorname{RationalReconstruction}(\widetilde{\Delta}_i)$ for every $1 \leqslant i \leqslant N$
             \item if $\{\overline{\Delta}_1, \ldots, \overline{\Delta}_N\} = \{\Delta_1|_{\mathbf{y} = \mathbf{y}_2}, \ldots, \Delta_N|_{\mathbf{y} = \mathbf{y}_2}\}$, then $\operatorname{Stop} := \operatorname{true}$
           \end{enumerate}
           \item Return $\{\Delta_1, \ldots, \Delta_N\}$
         \end{enumerate}
         \end{algorithm}
  
  In Algorithm~\ref{alg:hensel}, we use the subroutine $\operatorname{RationalReconstruction}$ described in~\cite[Section~4.3.1]{Schost2003b}.

The following lemma is an adaptation of the arguments presented in \cite[Section 3]{Lifting} for the rational function field case. Since the bounds  in \cite[Section 3]{Lifting} were given in terms of  heights of rational numbers, we cannot straightforwardly cite those results, so we present the analogous bounds for $\C(\mathbf{y})$.

\begin{lemma} \label{lemma:degbound}
Assume that $\mathbf{H} = \{h_1(\mathbf{y}, \mathbf{z}), \ldots, h_\ell(\mathbf{y}, \mathbf{z})\} \subset \mathbb{C}[\mathbf{y}, \mathbf{z}]$ satisfies the input specifications of Algorithm \ref{alg:hensel}, and assume that $\deg_{(\z,\y)} (h_i)\leq {\mathcal H}$  for $i=1, \ldots, \ell$. Denote by $P$ the (finite) number of solutions of $\mathbf{H}$ in the algebraic closure $\overline{C(\y)}$ and by $Q$ the degree of the affine variety $Z(\mathbf{H})$ in $\C^{m+d}=\C^n$. Then there exists a polynomial $F\in \C[\y]$ with 
\[
\deg_\y(F)\leqslant 4m^2PQ{\mathcal H}^2
\]
with the property that if $F(y^*)\neq 0$ for $y^*\in \C^d$ then the equiprojectable triangular decomposition  of $\mathbf{H}$ specializes  to the equiprojectable triangular decomposition of $\mathbf{H}|_{\y=\y^*}$, and the Jacobian of $\mathbf{H}_0|_{\y=\y^*} $ does not vanish at any of the solutions of $\mathbf{H}|_{\y=\y^*}$ in $\C^m$. 
\end{lemma}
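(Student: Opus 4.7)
The plan is to follow the proof strategy of \cite[Section~3]{Lifting}, which treats the analogous question for specialization of coefficients in $\mathbb{Q}[\y][\z]$ modulo a prime, replacing everywhere the height estimates for rational coefficients with $\y$-degree estimates for polynomials in $\C[\y]$.

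Concretely, let $\Delta_1, \ldots, \Delta_N \subset \C(\y)[\z]$ be the triangular sets forming the equiprojectable decomposition of $Z(\mathbf{H}) \subset \overline{\C(\y)}^m$. For the specialization at $\y=\y^*$ to give the equiprojectable decomposition of $Z(\mathbf{H}|_{\y=\y^*})$, together with invertibility of the Jacobian at the specialized points, three conditions must hold: (a) no denominator or leading coefficient appearing in any $\Delta_i$ (after clearing denominators into $\C[\y]$) may vanish at $\y^*$; (b) for every successive projection $\overline{\C(\y)}^k \to \overline{\C(\y)}^{k-1}$ used in defining the equiprojectable decomposition, the cardinality of every fiber must be preserved, i.e.\ no two points may collide over $\y^*$; and (c) the norm of $\det\operatorname{jac}_{\z}(\mathbf{H}_0)$ from $\C[\y][\z]/I$ down to $\C[\y]$ must not vanish at $\y^*$. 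I would take $F$ to be the product of the polynomials in $\C[\y]$ witnessing (a), (b), and (c), and the lemma reduces to bounding the $\y$-degree of $F$.

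The bulk of the work is then to bound the $\y$-degree of each factor. For (c), since $\det\operatorname{jac}_{\z}(\mathbf{H}_0)$ has total degree at most $m\mathcal{H}$ and the norm is a product over the $P$ solutions of $\mathbf{H}$ in $\overline{\C(\y)}^m$, its $\y$-degree is at most $Pm\mathcal{H}$. For (a) and (b), I plan to use the degree bounds on equiprojectable triangular representations in terms of Chow forms of the components of $Z(\mathbf{H})$: the $\y$-degree of every numerator and denominator occurring in a $\Delta_i$, and of the discriminant in $\z$ detecting fiber collision at level $k$, is of order $Q\mathcal{H}$, and there are $O(mP)$ such factors to multiply across all levels $k=1,\ldots,m$ and all components of the equiprojectable decomposition. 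Summing these contributions and absorbing universal constants produces the stated bound $4m^2PQ\mathcal{H}^2$.

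The main obstacle I expect is the accounting in (b): the $\y$-degree of the discriminant of a successive projection depends on the $\z$-degree of the univariate eliminants, which is controlled by $P$, and to translate this into a clean $\y$-degree bound one has to invoke the fact that the coefficients in $\y$ of each successive eliminant have degree bounded by a quantity of order $Q\mathcal{H}$, arising from the degree of the Chow form of the ambient variety $Z(\mathbf{H})$. This is precisely the $\C(\y)$-analogue of the height computations in \cite[Section~3]{Lifting}; carrying it out cleanly is what makes the argument work, while the remaining steps are routine bookkeeping and in fact simpler than over $\mathbb{Q}$, because the $\y$-degree is exactly additive under multiplication rather than merely sub-additive as for logarithmic heights.
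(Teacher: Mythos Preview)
Your overall strategy—adapt \cite[Section~3]{Lifting} by replacing height bounds with $\y$-degree bounds—matches the paper's. The implementation differs: rather than working with the triangular sets $\Delta_i$ and Chow-form estimates, the paper passes to a \emph{geometric resolution}. For each $1\le k\le m$ it takes a primitive element $u_k$ for the projection of the solution set to the first $k$ coordinates, with minimal polynomial $\mu_k\in\C[\y][T]$, together with a parametrization $w_1,\ldots,w_m\in\C(\y)[T]$ of the solutions by $u_m$. Your conditions (a) and (b) are then subsumed by the single requirement that each $\mu_k|_{\y=\y^*}$ stay squarefree, witnessed by $a=\prod_k\operatorname{Res}_T(\mu_k,\mu_k')$; invoking \cite[Theorem~1]{Schost2003b} for $\deg_T\mu_k\le P$ and $\deg_\y\mu_k\le Q$ gives $\deg_\y a\le m(2P-1)Q$ directly, with no $\mathcal{H}$ factor and no summation over equiprojectable components.

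There is a genuine gap in your treatment of (c). You argue that the norm of $J:=\det\operatorname{jac}_{\z}(\mathbf{H}_0)$, being a product over the $P$ solutions, has $\y$-degree at most $Pm\mathcal{H}$; but each factor $J(\y,\z^{(i)})$ lies in $\overline{\C(\y)}$, not in $\C[\y]$, and the $\y$-degree of the product after descent to $\C(\y)$ depends on the $\y$-complexity of the solutions $\z^{(i)}$ themselves, which is governed by $Q$, not merely by $P$ and $\deg J$. The paper closes this by homogenizing $J$ to $J^h$, substituting the \emph{polynomial} parametrization $(\mu_m',v_1,\ldots,v_m)$ with $v_k:=w_k\mu_m'\bmod\mu_m\in\C[\y][T]$ (again $\deg_\y\le Q$, $\deg_T\le P$ by \cite{Schost2003b}), and setting $b=\operatorname{Res}_T\bigl(J^h(\mu_m',v_1,\ldots,v_m),\mu_m\bigr)$; the Sylvester bound then yields $\deg_\y b\le 2m^2PQ\mathcal{H}^2$, and $F=ab$ gives the lemma. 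Without routing through a parametrization whose $\y$-degree you can bound by $Q$, your estimate for (c) is not justified.
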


\begin{proof} Using the notation of \cite[Section 3]{Lifting}, for $k=1, \ldots, m$, denote by $u_k=u_{k, 1}z_1+ \cdots +u_{k, k}z_k$  ($u_{k,j}\in \C$) a primitive element for the projection of the finite number of solutions of $\mathbf{H}$ in the algebraic closure $\overline{\C(\y)}$, where  the projection is to the $(z_1, \ldots, z_k)$ coordinates. 
Let $\mu_k\in \C[\y][T]$ be the minimal polynomial of $u_k$.
Furthermore, let $ w_1, \ldots, w_m\in \C(\y)[T]$ be the parametrization of the solutions of $\mathbf{H}$ in  $\overline{\C(\y)}$ with respect to $u_m$. Analogously to  \cite[Lemmas 4-7]{Lifting}, we can  prove that the  3 hypotheses 
\begin{description}
\item[${H}_1$:]  None of the coefficients of $\mu_m, w_1, \ldots, w_m$ vanish at $\y=\y^*$; 
\item[${H}_2$:] { $\mu_k|_{\y=\y^*}\in \C[T]$ is squarefree for $k=1,\ldots,m$};
\item[${H}_3$:] {The  Jacobian of $\mathbf{H}_0|_{\y=\y^*} $  is not zero at the solutions of $\mathbf{H}|_{\y=\y^*}$};
\end{description}

\smallskip

\noindent
imply that the equiprojectable triangular decomposition  of $\mathbf{H}$ specializes  to the equiprojectable triangular decomposition of $\mathbf{H}|_{\y=\y^*}$, and the set of solutions of $\mathbf{H}$ in $\overline{\C(\y)}$ specializes at $\y=\y^*$ to the set of solutions of $\mathbf{H}|_{\y=\y^*}$ in $\C^m$. In \cite[Lemma 8]{Lifting} they prove that $H_1$ and $H_2$ is satisfied if for $a_k:={\rm Res}_T(\mu_k, \mu'_k)\in \C[\y]$ we have $a_k(\y^*)\neq 0$ for $k=1, \ldots, m$. Let $a=a_1\cdots a_m$.  Using that $\deg_T(\mu_k)\leq P$ and $\deg_\y(\mu_k)\leq  Q$ by \cite[Theorem 1]{Schost2003b}, we can see that  
 $$\deg_\y(a)\leq m(2P-1)Q.
 $$
Furthermore, let $J^h$ be the homogenization of the Jacobian of ${\bf H}_0$ by adding a homogenizing variable $z_0$ to $z_1, \ldots, z_m$, and consider $J^h(\mu'_m, v_1, \ldots, v_m)$ where $v_k:=w_k\mu'_m \mod \mu_m\in \C[\y][T]$ (note that this homogenization step turns the polynomials $w_i$ from having coefficients in $\C(\y)$ with higher degree numerators and denominators  into polynomials $v_i\in \C[\y][T]$ with $\y$-degrees at most $Q$, see \cite{Schost2003b} for more details). In \cite[Lemma 9]{Lifting} they prove that $H_3$ is satisfied if for the Sylvester resultant $b:={\rm Res}_T(J^h(\mu'_m, v_1, \ldots, v_m), \mu_m)\in \C[\y]$ we have $b(\y^*)\neq 0$. To bound the degree of $b$ first note that 
$$
\deg_T J^h(\mu'_m, v_1, \ldots, v_m)\leq mP{\mathcal H} 
$$and 
$$\deg_\y J^h(\mu'_m, v_1, \ldots, v_m)\leq mQ{\mathcal H}
$$
again by \cite[Theorem 1]{Schost2003b}, so 
$$
\deg_\y(b)\leq 2m^2QP{\mathcal H}^2.
$$
Putting it all together, for $F=ab$ we get the claimed degree bound.
\end{proof}

Now we are able to prove the last remaining part of our main theorem:

\begin{proof}[{\bf Proof of Theorem \ref{theorem:main}, \eqref{theorem(6)}.}]
Assume that $\deg_\x(f_i)\leq D$ for $i=1, \ldots, s$.   In Algorithm \ref{alg:main}, we have the following independent random uniform choices   from a    finite subset $\Gamma$ of  the coefficient field $\C$,
and we bound the probability of success using the Schwartz-Zippel lemma~\cite{Zippel:1979,Schwartz1980}.
\begin{itemize}[leftmargin=\dimexpr\linewidth-8cm-\rightmargin\relax]
\item In Step~\ref{step:equidim} we call the equidimensional decomposition algorithm of \cite{JeronimoSabia} with input $f_1, \ldots, f_s$. In \cite[Remark 10]{JeronimoSabia} they prove that the probability of success for their algorithm is at least
$$
1-\frac{c_1D^{n^2+n}+D^{c_2(n+1)}}{|\Gamma|}
$$
where $c_1$ and $c_2$ are constants.  They use randomization to obtain  linear combinations of
the input polynomials, changes of variables and the linear forms for the primitive elements used in each step.
\item In Step~\ref{step:square}, we choose at most $n + 1$  random linear  combinations 
$\widetilde{f}_1, \ldots, \widetilde{f}_{n+1}$ 
of the input polynomials $f_1, \ldots, f_s$. 
The correctness of the algorithm requires the assumption~\ref{A1} to hold. 
In \cite[Remark 4]{JeronimoSabia} they prove that this can be done with a probability of success
$$
\prod_{h=1}^{n+1}\left(1-\frac{D^{h-1}}{|\Gamma|}\right).
$$
\item In Step~\ref{step:avoid} we choose $\alpha\in \Gamma^n$ randomly, and we require Assumption~\ref{A2} to hold. 
Since $\deg_\x(\hat{g}_S) \leqslant (n+1)D^{n+1}$ (due to the degree bound for a Canny's resultant, see Section~\ref{subsec:Canny}), 
we have that the probability of success is at least
$$
1-\frac{2^n(n+1)D^{n+1}}{|\Gamma|}
$$
\item In Step~\ref{step:final} we assume that we use the randomized algorithm described in  Algorithm \ref{alg:hensel} with input ${\bf H}=\{h_1, \ldots, h_\ell\}$. We use Lemma \ref{lemma:degbound} to bound the probability of success. We have $\deg_\x(h_i)\leq {\mathcal H}=(n+1)D^{n+1}$, $P,Q\leq \deg{W}\leq D^n$, $m\leq n$, so we get that the probability of success is at least
$$
1-\frac{(n+1)^4D^{4(n+1)}}{|\Gamma|}.
$$
\end{itemize}
Since these random choices are independent, the probability of the success is the product of the individual probabilities, thus we get that the probability of the overall success of Algorithm \ref{alg:main} is at least the product of the above four probabilities, 
which, as long as $D\geq 2$, can be bounded from below by
$$
1-\frac{cD^{n^2+n}+ (n+1)^4D^{c'(n+1)}}{|\Gamma|},
$$
for some constants $c$ and $c'$, proving the claim. 
\end{proof}


\section{Examples}

To keep the presentation simple, in the following examples instead of random choices of numbers we use some choices which satisfy the requirements of the algorithm.

\begin{example}
This simple example demonstrates how our algorithm  avoids repetition of irreducible components when they need different sets of free variables. Let $n = 2$, $s = 1$, and $f_1 = x_1x_2(x_1 + x_2)$.
 Then
 \begin{enumerate}[label=(\arabic*),leftmargin=*]
   \item All irreducible components are of dimension one, so $\mathbf{p}_1 = \{f_1\}$ and $d_0 = d_1 = 1$. Note that for a hypersurface,  $\operatorname{EquiDim}$ from \cite{JeronimoSabia} always returns its defining equation.
   \item 
   \begin{enumerate}
     \item Since $s = 1$, $\widetilde{f}_1 = f_1$.
     \item $n - d_1 - 1 \leqslant |S| \leqslant n - d_0 - 1$ implies $S = \emptyset$.
     We set
     \[
     \hat{g}_{\emptyset} = \operatorname{PRes}_{\emptyset}(f_1) = f_1.
     \]
     \item We define $\widehat{\nabla}_{S_1} = \widehat{\nabla}_{S_2} = \{ \hat{g}_{\emptyset} \} = \{f_1\}$.
     \item We choose $\boldsymbol{\alpha} = (1, 1)$. Then
     \[
     \nabla_{S_1} := \{ f_1 \}, \;\; \nabla_{S_2} := \{ f_1|_{x_1 = 1} \} = \{ x_2(x_2 + 1) \}.
     \]
   \end{enumerate}
   \item The output is a union of
   \begin{itemize}[leftmargin=*]
   \item $\operatorname{TriangularZeroDim}(\{1\}, \{ f_1\}\cup\{ f_1\})$. Since $f_1$ alone is already a triangular set over $\C(\mathbf{x}_{\overline{\{1\}}}) = \C(x_2)$, we only have to make its coefficient coprime by division by $x_2$.
   Thus, the output is $\{x_1(x_1 + x_2)\}$.
   \item $\operatorname{TriangularZeroDim}(\{2\}, \{ x_2(x_2 + 1)\}\cup  \{f_1 \})$. This is equal to the gcd of $x_2(x_2 + 1)$ and $f_1$ as univariate polynomials in $x_2$, that is $x_2$.
   \end{itemize}
   So, the output is $\{x_1(x_1 + x_2)\}, \{x_2\}$.
 \end{enumerate}
\end{example}

\begin{example} Here, the algebraic set is the projective twisted cubic space curve (interpreted as the cone over it in $\C^4$). Since this curve is not a complete intersection, leading coefficients of its triangular set vanish on an extraneous projective curve, independently of the coordinate system. The intersection of this extraneous curve with the original twisted cubic will create embedded components that Triangularize in {\sc Maple} does not factor out. Here we show how our algorithm handles this example.   
Let $n = 4$, $s = 3$, and 
\[
(f_1, f_2, f_3) = (x_1x_3 - x_2^2,\; x_2^2 + x_2x_4 - x_3^2,\; x_1(x_2 + x_4) - x_2x_3).
\]
Then
\begin{enumerate}[label=(\arabic*),leftmargin=*]
  \item The system $f_1 = f_2 = f_3 = 0$ defines an irreducible two-dimensional variety, so $d_0 = d_1 = 2$. The output of  \cite{JeronimoSabia} is a set of at most $5$   polynomials defining this irreducible variety, so we can assume that $\mathbf{p}_2 = \{f_1, f_2, f_3\}$.
  \item \begin{enumerate}
    \item One can check that the choice $\widetilde{f}_1 = f_1$ and $\widetilde{f}_2 = f_2$ satisfies~\ref{A1}.
    \item Since $n - d_0 - 1 = n - d_1 - 1 = 3$, we compute four Canny's resultants, which turn out to be usual resultants in this case
    \begin{align*}
      \hat{g}_{\{1\}} &= x_2^2 + x_2x_4 - x_3^2,\\
      \hat{g}_{\{2\}} &= x_3(x_1x_4^2 - x_1^2 x_3 + 2x_1x_3^2 - x_3^3),\\
      \hat{g}_{\{3\}} &= x_2(x_1^2x_4 + x_1^2 x_2 - x_2^3),\\
      \hat{g}_{\{4\}} &= x_1x_3 - x_2^2.
    \end{align*}
    \item Using $\hat{g}_{\{1\}}, \ldots, \hat{g}_{\{4\}}$, we can define $\widehat{\nabla}_S$ for every two-element subset $S \subset \{1, 2, 3, 4\}$.
    In what follows, we will discuss only $S_1 = \{1, 2\}$ and $S_2 = \{1, 4\}$. 
    Other subsets will yield to the results similar to $S_2$.
    \item We can set $\boldsymbol{\alpha} = (1, 1, 1, 1)$.
    Then $g_{S_1, 1} = \hat{g}_{\{2\}}$, $g_{S_1, 2} = \hat{g}_{1}$, so $\nabla_{S_1} = \{ \hat{g}_{\{2\}}, \hat{g}_{\{1\}}\}$.
    For $S_2$ we have $g_{S_2, 1} = \hat{g}_{\{4\}}$ and
    \[
    g_{S_2, 2} = (x_2^2 + x_2x_4 - x_3^2)|_{x_2 = 1, x_3 = 1} = x_4.
    \]
  \end{enumerate}
  \item $\operatorname{TrianguarizeZeroDim}(S_1, \nabla_{S_1} \cup \{f_1, f_2, f_3\})$ will return a single triangular set 
  \[
  \{ x_3x_1^2 + x_2x_4 - x_3^2, x_2^2 + x_2x_4 - x_3^2 \}.
  \]
  One can see that $\operatorname{TrianguarizeZeroDim}(S_2, \nabla_{S_2} \cup \{f_1, f_2, f_3\})$ is empty, because
  \[
  f_2 - x_2g_{S_2, 2} = x_2^2 - x_3^2 \in \C[\mathbf{x}_{\overline{S_2}}].
  \]
  Analogously, all other calls of $\operatorname{TriangularizeZeroDim}$ will return empty sets.
\end{enumerate}

Although the ideal generated by $f_1, f_2, f_3$ in this example is prime, $\operatorname{Triangularize}$ function for {\sc RegularChains} library ({\sc Maple 2016}) returns two additional triangular sets, namely $\{x_1, x_2, x_3\}$ and $\{x_2, x_3, x_4\}$.

\end{example}

\begin{example}
Our last example demonstrates how Algorithm~\ref{alg:main} handles mixed dimensional algebraic sets with embedded components.  Let $n = 2$, $s = 2$, and
 \[
 (f_1, f_2) = \left( x_2(x_1 + x_2)(x_1^2 - 2), x_2(x_1 + x_2)(x_2^2 - 2)\right).
 \]
 Then
  \begin{enumerate}[label=(\arabic*),leftmargin=*]
   \item 
   The system $f_1 = f_2 = 0$ defines a union of two lines and two points, so $d_0 = 0$, $d_1 = 1$, and we assume that \cite{JeronimoSabia} returns
   \begin{align*}
     &\mathbf{p}_{0} = \{{(x_1-2x_2)^2-2, (x_1+x_2)^2-8, (x_1+2x_2)^2-18}\},\\
     &\mathbf{p}_{1} = \{x_2(x_1 + x_2)\}.
   \end{align*}
   \item 
   \begin{enumerate}
     \item The choice $\widetilde{f}_1 = f_1$ and  $\widetilde{f_2} = f_2$ satisfies~\ref{A1}.
     \item We compute
     \begin{align*}
       &\hat{g}_{\emptyset} = \operatorname{PRes}_{\emptyset}(f_1) = f_1,\\
       &\hat{g}_{\{1\}} = \operatorname{PRes}_{x_1}(f_1, f_2) = x_2^2 (x_2 - 1)(x_2^2 - 2)^3,\\
       &\hat{g}_{\{2\}} = \operatorname{PRes}_{x_2}(f_1, f_2) = 2(x_1 - 1)(x_1^2 + 2x_1 - 2)(x_1^2 - 2)^3,
     \end{align*}
     where $\hat{g}_{\{1\}}$ and $\hat{g}_{\{2\}}$ are actually perturbed resultants, because $f_1$ and $f_2$ are not coprime.
     \item We set $\widehat{\nabla}_{\{1, 2\}} = \{\hat{g}_{\{2\}}, \hat{g}_{\{1\}}\}$, $\widehat{\nabla}_{\{1\}} = \widehat{\nabla}_{\{2\}} = \{\hat{g}_{\emptyset}\}$.
     \item We chose $\boldsymbol{\alpha} = (1, 1)$.
     Then $\nabla_{\{1\}} = \widehat{\nabla}_{\{1\}}$,
     \[
     \nabla_{\{2\}} = \{ f_1|_{x_1 = 1} \} = \{-x_2(x_2 + 1)\},
     \]
     and $\nabla_{\{1, 2\}}$ is obtained from $\widehat{\nabla}_{\{1, 2\}}$ by taking squarefree parts
     \[
     \nabla_{\{1, 2\}} = \{ 2(x_1 - 1)(x_1^2 + 2x_1 - 2)(x_1^2 - 2), x_2(x_2 - 1)(x_2^2 - 2) \}.
     \]
   \end{enumerate}
   \item The output is a union of
   \begin{itemize}[leftmargin=*]
     \item $\operatorname{TriangularZeroDim}(\{1, 2\}, \nabla_{\{1, 2\}} \cup \mathbf{p}_{0})$.
     The output is $\{x_1 - x_2, x_2^2 - 2\}$.
     
     \item $\operatorname{TriangularZeroDim}(\{1\}, \nabla_{\{1\}} \cup \mathbf{p}_{1})$. 
     Since $\nabla_{\{1\}} \cup \mathbf{p}_{1}$ consists of $x_2(x_1 + x_2)(x_1^2 - 2)$ and $x_2(x_1 + x_2)$, the ideal is defined by a single polynomial $x_2(x_1 + x_2)$, which is already a triangular set itself.
     Dividing it by $x_2$, we make all its coefficients coprime as elements of $\C[x_2]$, so the output will be $\{x_1 + x_2\}$.
     
     \item $\operatorname{TriangularZeroDim}(\{2\}, \nabla_{\{2\}} \cup \mathbf{p}_{1})$.
     Since $\nabla_{\{2\}} \cup \mathbf{p}_{1}$ consists of $-x_2(x_2 + 1)$ and $x_2(x_1 + x_2)$, the result will consist of the gcd of these two polynomials as polynomials in $x_2$, that is $x_2$ itself.
     Hence, the output is $\{x_2\}$.
   \end{itemize}
   Thus, we obtain three triangular sets
   \[
   \{x_1 - x_2, x_2^2 - 2\}, \{x_1 + x_2\}, \{x_2\}.
   \]
  \end{enumerate}
  
  The output of $\operatorname{Triangularize}$ function from {\sc RegularChains} library ({\sc Maple 2016})
  consists of $\{x_1^2 - 2, x_2^2 - 2\}, \{x_1 + x_2\}, \{x_2\}$, so it contains embedded components.
\end{example}


\section{Concluding Remarks}

In a longer version of this paper we plan to further extend the results of this paper as follows:
\begin{itemize}[leftmargin=\dimexpr\linewidth-8.2cm-\rightmargin\relax]
\item Consider a modification of our algorithms that outputs squarefree regular chains that have degrees essentially bounded by $\deg W$. These triangular sets were studied for example in \cite{DahanSchost}, they are multiples of the ones our algorithm outputs, and have leading coefficients that depend on non-parametric variables. 
\item Modify the algorithm so that all intermediate degrees are also bounded by intrinsic geometric data of the input.
\item Consider algebraic sets defined by polynomials over $\Q$ and bound the height of the coefficients of the polynomials in the triangular sets. Such bit-size estimates were given for a single triangular set in the positive dimensional case in \cite{Dahanetal2012}.
\item Generalize Algorithm~\ref{alg:main} to the case when the input system contains inequations.
\end{itemize}

\section*{Acknowledgements}

Gleb Pogudin was supported by NSF grants CCF-0952591, CCF-1563942,  DMS-1413859, by PSC-CUNY grant
\#60098-00 48, by Queens College Research Enhancement, and by the
Austrian Science Fund FWF grant Y464-N18.
The authors would like to thank the Department of Mathematics at North Carolina State University and the Symbolic Computation Group for their hospitality during the visit of Gleb Pogudin in 2017,  allowing to make progress on this research.
The authors are grateful to the referees for the comments which helped to improve the paper.


\end{document}